\newcommand{\bm}[1]{\mbox{\boldmath $#1$}}
\def\be{\begin{equation}}
\def\ee{\end{equation}}
\def\bea{\begin{eqnarray}}
\def\eea{\end{eqnarray}}
\def\bean{\begin{eqnarray*}}
\def\eean{\end{eqnarray*}}
\def\espaitemps{({\cal V},g)}
\def\varietat{{\cal V}}
\def\Journal#1#2#3#4{{#1} {\bf #2}, #3 (#4)}
\def\GRG{\em Gen. Rel. Grav.}
\def\PRL{\em Phys. Rev. Lett.}
\def\ii{\mbox{\rm I\!I}}
\begin{document}

\title*{Umbilical-Type Surfaces in Spacetime}
% Use \titlerunning{Short Title} for an abbreviated version of
% your contribution title if the original one is too long
\author{Jos\'e M. M. Senovilla}
% Use \authorrunning{Short Title} for an abbreviated version of
% your contribution title if the original one is too long
\institute{Jos\'e M. M. Senovilla \at F\'{\i}sica Te\'orica, Universidad del Pa\'{\i}s Vasco, Apartado 644, 48080 Bilbao (Spain) \email{josemm.senovilla@ehu.es}
%\and Name of Second Author \at Name, Address of Institute \email{name@email.address}
}
%
% Use the package "url.sty" to avoid
% problems with special characters
% used in your e-mail or web address
%
\maketitle

\abstract{A spacelike surface $S$ immersed in a 4-dimensional Lorentzian manifold will be said to be umbilical along a direction $N$ normal to $S$ if the second fundamental form along $N$ is proportional to the first fundamental form of $S$. In particular, $S$ is pseudo-umbilical if it is umbilical along the mean curvature vector field $H$, and (totally) umbilical if it is umbilical along all possible normal directions. The possibility that the surface be umbilical along the unique normal direction orthogonal to $H$ --- ``ortho-umbilical'' surface--- is also considered.\newline\indent
I prove that the necessary and sufficient condition for $S$ to be umbilical along a normal direction is that two independent Weingarten operators (and, a fortiori, all of them) commute, or equivalently that the shape tensor be diagonalizable on $S$. The umbilical direction is then {\em uniquely} determined. %\newline\indent
This can be seen to be equivalent to a condition relating the normal curvature and the appropriate part of the Riemann tensor of the spacetime. In particular, for conformally flat spacetimes (including Lorentz space forms) the necessary and sufficient condition is that the normal curvature vanishes. %\newline\indent
Some further consequences are analyzed, and the extension of the main results to arbitrary signatures and higher dimensions is briefly discussed.}

\section{Introduction}
\label{sec:Int}
Spacelike surfaces play a crucial role in gravitational physics (such as in General Relativity and in any other geometrical theory based on a Lorentzian manifold), specially those which are (marginally) (outer) trapped and closed ---compact with no boundary---, see subsection \ref{subsec:H}. 

A few years ago I presented a complete local classification of spacelike surfaces in 4-dimensional Lorentzian manifolds \cite{S4}, and discussed its generalization to arbitrary dimensions. The classification was carried out according to the {\em extrinsic} properties of the surface: it is an algebraic classification based, at each point, on the properties of two independent Weingarten operators. Specifically, I used two {\em null} Weingarten operators $A_{\ell}$ and $A_k$ (where $\ell$ and $k$ are the two independent null vectors fields orthogonal to the surface, see below.)
Each Weingarten operator is a self-adjoint matrix which can be readily classified algebraically according to the signs of their (real) eigenvalues. This produces 8 different types for each matrix, and therefore 64 types of points for generic spacelike surfaces.

To my surprise, this was not enough for a complete classification, and I had to introduce an extra parameter to each point taking into account the {\em relative orientation} of the two null Weingarten operators at the chosen point. 
As far as I know, this extra parameter had not been considered in the literature ---not even for the proper Riemannian case. 
The parameter can be chosen as the angle between the two orthonormal (ON) eigen-bases for $A_\ell$ and $A_k$. Therefore, it takes values on a finite closed interval of $\mathbb{R}$.
Actually, one can prove \cite{S4} that the parameter is simply related to the commutator
$$
\left[A_k,A_\ell\right]
$$
of the two null Weingarten operators.

The meaning and interpretation of this parameter became an important open question, and the goal of the present paper is to answer it. The main theorems to be proven are the following:
\begin{svgraybox}
\begin{theorem}\label{th:1}
Consider a spacelike surface $S$ immersed in a 4-dimensional Lorentzian manifold $\espaitemps$. The necessary and sufficient condition for $S$ to be umbilical along a normal direction is that two independent Weingarten operators ---and, a fortiori, all of them--- commute.

The umbilical direction is then uniquely determined ---unless the surface is totally umbilical.
\end{theorem}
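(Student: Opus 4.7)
The plan is to exploit two structural facts: the Weingarten map $N\mapsto A_N$ is $\mathbb{R}$-linear in the normal vector field $N$, and in a two-dimensional real inner-product space any two commuting self-adjoint operators admit a common orthonormal eigenbasis. Accordingly I would fix a null frame $\{\ell,k\}$ of the (Lorentzian, rank-$2$) normal bundle and work with the null Weingarten operators $A_\ell$ and $A_k$, both of which are self-adjoint with respect to the first fundamental form of $S$. Any normal vector can then be written $N=a\ell+bk$, giving $A_N=aA_\ell+bA_k$.

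For the direct implication, if $A_N=\lambda\,\mathrm{Id}$ for some normal direction $N$, then $A_N$ commutes with every linear operator on $T_pS$, and by the linearity just noted this forces every pair of Weingarten operators to commute; in particular any chosen independent pair does, and conversely if one independent pair commutes then so do all, since every other $A_{N'}$ is a linear combination of the two.

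For the converse, assume $[A_\ell,A_k]=0$. Simultaneously diagonalise them in an orthonormal basis $\{e_1,e_2\}$ of $T_pS$, obtaining $A_\ell e_i=\alpha_i e_i$ and $A_k e_i=\beta_i e_i$. Requiring $aA_\ell+bA_k$ to be a multiple of the identity collapses to the single linear equation
\[
a(\alpha_1-\alpha_2)+b(\beta_1-\beta_2)=0,
\]
which always admits a non-trivial solution $(a,b)$; the corresponding $N=a\ell+bk$ is an umbilical direction. A coordinate-free variant I may prefer is to split each operator into its trace and traceless parts and observe that in dimension $2$ commutativity of symmetric operators forces the traceless parts $\widetilde A_\ell$, $\widetilde A_k$ to be proportional, so a suitable combination $aA_\ell+bA_k$ has vanishing traceless part, hence is a multiple of $\mathrm{Id}$.

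Uniqueness of the umbilical direction follows from the same linear equation: the solution $(a,b)$ is unique up to scale unless both coefficients vanish, i.e.\ $\alpha_1=\alpha_2$ and $\beta_1=\beta_2$; in that case $A_\ell$ and $A_k$, and therefore every $A_N$ by linearity, are already multiples of the identity, so $S$ is totally umbilical and every normal direction is umbilical. The only real obstacle I foresee is not the pointwise algebra but a mild smoothness concern: ensuring that the coefficients $(a,b)$, equivalently the direction $N$, depend smoothly on the point of $S$ across loci where eigenvalues cross or where one of $\widetilde A_\ell,\widetilde A_k$ vanishes. This should be handled cleanly by working globally with the traceless parts $\widetilde A_\ell,\widetilde A_k$ as smooth tensor fields and expressing $N$ directly in terms of them rather than via the individual eigenvalues.
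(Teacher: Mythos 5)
Your proposal is correct and follows essentially the same route as the paper: necessity via the commutator applied to the umbilical condition written in the null basis, and sufficiency via simultaneous diagonalization of $A_\ell$ and $A_k$, with the umbilicity requirement collapsing to the single linear equation $a(\alpha_1-\alpha_2)+b(\beta_1-\beta_2)=0$, unique up to scale except in the totally umbilical case. Your coordinate-free reformulation via the traceless parts is only a cosmetic variant (it is precisely what the paper packages into the vector field $G$), so nothing substantively new or missing.
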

\end{svgraybox}
This happens to be equivalent to the condition that the shape tensor be diagonalizable on $S$.

\begin{svgraybox}
\begin{theorem}\label{th:2}
The necessary and sufficient condition for $S$ to be umbilical along a normal direction is that the normal curvature of $S$ equals the ``tangent-normal" part of the Riemann tensor of $\espaitemps$.
\end{theorem}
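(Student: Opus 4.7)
The plan is to deduce Theorem~\ref{th:2} from Theorem~\ref{th:1} by invoking the classical Ricci equation of submanifold theory, which relates the ambient Riemann tensor, the normal curvature, and the commutator of the Weingarten operators. For the spacelike surface $S$ in $\espaitemps$, with $X,Y$ tangent and $\xi,\eta$ normal, this identity takes the schematic form
\[ R(X,Y,\xi,\eta) \;=\; R^\perp(X,Y,\xi,\eta) \,-\, g([A_\xi,A_\eta]X,Y), \]
up to the sign conventions fixed by the definitions of $R$ and $R^\perp$. The left-hand side is precisely what the statement calls the \emph{tangent--normal part} of the ambient Riemann tensor: the components of $R$ with two tangent and two normal slots, viewed as a tensor field on $S$ of the same type as $R^\perp$ (a $2$-form on $S$ valued in skew endomorphisms of the rank-$2$ normal bundle).

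Given this identity, the equality asserted in the theorem---that $R^\perp$ equals the tangent--normal part of $R$ on $S$---is pointwise equivalent to $g([A_\xi,A_\eta]X,Y)=0$ for all tangent $X,Y$ and all normal $\xi,\eta$, and hence to $[A_\xi,A_\eta]=0$ for every pair of normal vectors. Since $\xi\mapsto A_\xi$ is linear, it is enough to verify this on any two linearly independent normal directions, for example on the null pair $(\ell,k)$ used in \cite{S4}. So the statement of Theorem~\ref{th:2} is equivalent to $[A_\ell,A_k]=0$, which by Theorem~\ref{th:1} is precisely the condition for $S$ to be umbilical along a normal direction. Both implications (necessity and sufficiency) come for free from this chain.

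The main obstacle is essentially bookkeeping: one must fix the sign convention so that the Ricci equation delivers an actual equality---not a minus-equality---in the final statement, and check that its standard derivation (algebraic, using only the Gauss and Weingarten decompositions of the ambient Levi-Civita connection together with the ambient curvature identity) goes through unchanged for a codimension-two spacelike submanifold of a four-dimensional Lorentzian manifold. It does, because the derivation never uses the signature of the normal bundle nor the indefinite character of $g$. A minor secondary point is presentational: one should write the tangent--normal part of $R$ as a tensor on $S$ of manifestly the same type as $R^\perp$, so that the equality ``$R^\perp = R^{\rm tn}$'' asserted in the theorem is literally well-posed, and so that the preceding algebraic argument translates into an intrinsic identity on $S$.
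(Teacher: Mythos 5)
Your proposal is correct and follows essentially the same route as the paper: the paper's proof also consists of citing its Ricci equation (equation (\ref{ricci}), which expresses the tangent--normal part of $R$ as the commutator term $g([A_M,A_N](Y),X)$ plus the normal-curvature term $ds(X,Y)\,g(\star^\perp N,M)$) and then invoking Theorem \ref{th:1} to identify vanishing of all commutators $[A_M,A_N]$ with the existence of an umbilical direction. Your remarks about sign conventions and about phrasing the equality $R^\perp = R^{\rm tn}$ as a well-posed tensor identity on $S$ correspond exactly to the paper's Remark \ref{reformulation} and its formulas (\ref{normalR}) and (\ref{perp}).
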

\begin{corollary}\label{coro:1}
In particular, for locally conformally flat $\espaitemps$ (including Lorentz space forms) the neccessary and sufficient condition for $S$ to be umbilical along a normal direction is that the normal curvature vanishes.
\end{corollary}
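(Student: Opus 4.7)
The plan is to derive the corollary as a direct consequence of Theorem \ref{th:2}. Since that theorem characterizes the existence of an umbilical normal direction by the equality between the normal curvature of $S$ and the ``tangent-normal'' projection of the ambient Riemann tensor, all I need to show is that in any locally conformally flat $\espaitemps$ this tangent-normal projection vanishes identically on $S$, so that the characterization collapses to the vanishing of the normal curvature alone.

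First I would invoke the fact that in a locally conformally flat manifold the Weyl tensor vanishes, so that the Riemann tensor is completely determined by the Schouten tensor $P_{ab}=\frac{1}{n-2}\bigl(R_{ab}-\frac{R}{2(n-1)}g_{ab}\bigr)$ through the Kulkarni--Nomizu-type expression
\begin{equation*}
R_{abcd}=g_{ac}P_{bd}-g_{ad}P_{bc}+g_{bd}P_{ac}-g_{bc}P_{ad}.
\end{equation*}
Next I would identify the ``tangent-normal part'' appearing in Theorem \ref{th:2} as the component $R_{ABij}$ with $A,B$ tangent to $S$ and $i,j$ normal to $S$; this is precisely the object that the Ricci equation of submanifold theory pairs with the normal curvature. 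Substituting the formula above produces four terms, each carrying a mixed tangent--normal metric coefficient of the type $g_{Ai}$ (or $g_{Bj}$, $g_{Aj}$, $g_{Bi}$). Because tangent and normal bundles are mutually orthogonal all such coefficients vanish, hence $R_{ABij}\equiv 0$ on $S$. Theorem \ref{th:2} then immediately yields the stated equivalence.

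The only delicate step is the bookkeeping: making sure that the ``tangent-normal'' object in the statement of Theorem \ref{th:2} is exactly the $R_{ABij}$-projection described above, with the same index pairing and sign conventions used for the normal curvature. Once that identification is made, the rest is purely algebraic and no extrinsic data of $S$ enter the conclusion. The specialization to Lorentz space forms is even cleaner: one has $R_{abcd}=K\bigl(g_{ac}g_{bd}-g_{ad}g_{bc}\bigr)$, and the same orthogonality relation $g_{Ai}=0$ forces $R_{ABij}=0$ directly, without any need for the Schouten decomposition.
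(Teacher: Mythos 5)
Your proposal is correct and takes essentially the same route as the paper: both reduce the corollary to Theorem \ref{th:2} and then show that the tangent--tangent--normal--normal components of the Riemann tensor vanish in the conformally flat case because the non-Weyl part of the curvature is built from the metric, whose mixed tangent--normal components vanish by orthogonality of $T_xS$ and $T_xS^\perp$. The only cosmetic difference is that the paper states the general identity $(R(X,Y)N)^\perp=(C(X,Y)N)^\perp$ and sets $C=0$, whereas you substitute the Schouten/Kulkarni--Nomizu decomposition of $R$ directly---the same algebraic fact in different packaging.
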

\end{svgraybox}
A precise formulation of Theorem \ref{th:2} is presented in Remark \ref{reformulation} after the necessary notions and notations have been introduced.

There are several other interesting consequences of these theorems, as well as explicit formulas for the umbilical direction. These will be presented in section \ref{sec:consequences}. In subsection \ref{subsec:G}, I introduce a new vector field $G$, normal to the surface, which characterizes the umbilical property and, together with the traditional mean curvature vector $H$, defines the main properties of the surface.

The main results extend to {\em non-null} surfaces in 4-dimensional semi-Riemannian manifolds of {\em arbitrary} signature as I discuss succinctly at the end of the paper. The higher dimensional case is, however, an open problem.

\section{Basic Concepts and Notation}
\label{sec:basics}
Let $\espaitemps$ be a $4$-dimensional, oriented and time-oriented, Lorentzian manifold with metric tensor $g$ of signature $(-,+,+,+)$. At every $x\in \varietat$, the isomorphism between tangent vectors and one-forms, that is, between $T_{x}\varietat$ and $T^{*}_{x}\varietat$ is denoted as follows
$$
\begin{array}{rcc}
\flat \, : T_{x}\varietat & \longrightarrow & T^{*}_{x}\varietat \\
v & \longmapsto & v^{\flat}
\end{array}
$$
and defined by $v^{\flat}(w) = g(v,w)$, $\forall w\in T_{x}\varietat $. Its inverse map is denoted by $\sharp$. These maps extend naturally to the tangent and co-tangent bundles.

An immersed surface is given by the pair $(S,\Phi)$ where $S$ is a 2-dimensional manifold and $\Phi : S \longrightarrow \varietat$ is an immersion. Such an $S$ does not have to be necessarily orientable. However, as the computations herein presented will be local, I will tacitly assume ---without loss of generality--- that $S$ is embedded and oriented. For instance, given that any point of an immersed $S$ has an open neighborhood which can be identified with its image in $\espaitemps$, to avoid confusion and unnecessary complications in the notation $S$ will be identified with its image $\Phi (S)$ in $\varietat$.
 
The {\it first fundamental form} of $S$ in $\varietat$ is simply $\bar g\equiv \Phi^{*}g$, where $\Phi^{*}$ is the pullback of $\Phi$. From now on $\bar g$ will be assumed to be positive definite on $S$, which implies that every tangent vector in $T_{x}S$ $\forall x\in S$ is spacelike and then $S$ is said to be {\em spacelike}.
For such an $S$, at any $x\in S$ one has the orthogonal decomposition
$$
T_{x}\varietat =T_{x}S\oplus T_{x}S^{\perp}\, .
$$
 Let $\mathfrak{X}(S)$ (respectively $\mathfrak{X}(S)^{\perp}$) denote the set of smooth vector fields tangent (resp. orthogonal) to $S$. In what follows, and for the sake of brevity, I will often give definitions and properties on $\mathfrak{X}(S)$, but they of course have always a previous, more fundamental, version on each $T_{x}S$. Thus, for instance, the volume element 2-form associated to $(S,\bar g)$ ---denoted here by by $\bar\epsilon$---, together with the volume element 4-form $\epsilon$ of $\espaitemps$, induces a volume element 2-form on each $T_{x}S^{\perp}$, denoted by $\epsilon^{\perp}$. The corresponding Hodge dual operator (see e.g. \cite{BHMS}) is written and defined as 
$$%\bar\star X \equiv (i_X\bar\epsilon )^\sharp, \,\,\, \forall X\in \mathfrak{X}(S),\hspace{7mm}
\star^{\perp}N \equiv (i_N\epsilon^\perp )^\sharp , \,\,\, \forall N\in \mathfrak{X}(S)^\perp \, .
$$
The vector field $\star^{\perp}N$ defines the unique normal direction in $\mathfrak{X}(S)^\perp$ orthogonal to the normal $N\in \mathfrak{X}(S)^\perp$.

The surface $S$ with the first fundamental form is a Riemannian manifold $(S,\bar g)$. As is well known, its {\it Levi-Civita connection} $\bar\nabla$, with $\overline\nabla \bar g =0$ and no torsion, can be defined as \cite{O,Kr}
$$
\overline\nabla_{X}\, Y\equiv \left(\nabla_{X}\, Y\right)^{T} , \hspace{1cm} \forall X, Y\in \mathfrak{X}(S) 
$$
where $\nabla$ is the canonical connection of $\espaitemps$.\footnote{There is a long standing tradition among mathematicians who study submanifolds to use the opposite convention, that is, $\nabla$ for the inherited connection and $\bar\nabla$ for the background connection. I stress this point here in the hope that this will avoid any possible confusion.}

The {\em normal connection} $D$ acts, in turn, on $\mathfrak{X}(S)^\perp$
$$
D_{X} : \mathfrak{X}(S)^\perp \longrightarrow \mathfrak{X}(S)^\perp
$$
for $X\in \mathfrak{X}(S)$, and is given by the standard definition \cite{O}
$$
D_X N \equiv \left(\nabla_X N \right)^\perp , \hspace{5mm} \forall N \in \mathfrak{X}(S)^\perp \hspace{5mm} \forall X\in \mathfrak{X}(S).
$$

\subsection{Extrinsic geometry: 2nd fundamental forms and Weingarten operators.}
The basic extrinsic object is $
\ii : \, \mathfrak{X}(S) \times \mathfrak{X}(S) \longrightarrow \mathfrak{X}(S)^{\perp}$, called the {\em shape tensor} or {\em second fundamental form tensor} of $S$ in $\varietat$ and defined by \cite{O,Kr}
$$
-\ii(X, Y)\equiv \left(\nabla_{X}\, Y\right)^{\perp} =\nabla_{X}\, Y - \overline\nabla_{X}\, Y \, , \hspace{1cm} \forall X, Y\in \mathfrak{X}(S) 
$$
(observe the choice of sign, that may be unusual in some contexts and is actually opposite to  \cite{O,Kr}).
$\ii$ contains the information concerning the ``shape'' of $S$ within $\varietat$ along {\em all} directions normal to $S$. Notice that $$\ii(X, Y)=\ii(Y,X).$$

Given any normal direction $N\in \mathfrak{X}(S)^{\perp}$, the {\em second fundamental form} of $S$ in $\espaitemps$ relative to $N$ is the 2-covariant symmetric tensor field on $S$ defined by
$$
{K}_{N}(X,Y)\equiv g \left(N,\ii (X,Y)\right),
\hspace{1cm} \forall X, Y\in \mathfrak{X}(S)\, .
$$
The {\em Weingarten operator} 
$$A_N : \mathfrak{X}(S)\longrightarrow \mathfrak{X}(S)$$ 
associated to $N\in \mathfrak{X}(S)^{\perp}$ is defined by
$$
A_N(X) \equiv \left(\nabla_X N \right)^T , \hspace{1cm} \forall X \in \mathfrak{X}(S).
$$
Observe that $$\bar g (A_N(X),Y)={K}_{N}(X,Y), \hspace{3mm}\forall X, Y\in \mathfrak{X}(S),$$
hence, at each $x\in S$, $A_N|_x$ is a self-adjoint linear transformation on $T_x S$. As such, it is always diagonalisable over $\mathbb R$.

\subsection{Special bases on $ \mathfrak{X}(S)^{\perp}$.}
$S$ having co-dimension 2, there are two {\em independent} normal vector fields on $S$. They can be appropriately chosen to form an ON basis on $\mathfrak{X}(S)^{\perp}$, in which case I will denote them by $u,n\in \mathfrak{X}(S)^{\perp}$, with 
$$
g(n,n)=-g(u,u)=1, \,\, \, g(u,n)=0 \, .
$$
 Of course, any two such ON bases are related by a Boost (Lorentz transformation):
\begin{equation}
\left(\begin{array}{c} u' \\ n' \end{array}\right)=\left(\begin{array}{cc} \cosh\beta &\sinh\beta \\ \sinh\beta & \cosh\beta\end{array}\right)\left(\begin{array}{c} u \\ n\end{array}\right)\label{free0}
\end{equation}
where $\beta$ is a smooth function on $S$.

The two independent normal vector fields can also be chosen to be null (and future-pointing say), and I will denote these by $k,\ell \in \mathfrak{X}(S)^{\perp}$,
so that $$g(\ell,\ell)=g(k,k) =0, \hspace{3mm} g({\ell},{k})=-1$$
the last of these being a convenient normalization condition. Observe that, to any ON basis $\{u,n\}$ on $\mathfrak{X}(S)^{\perp}$, one can associate a null basis given by $\sqrt{2}\, \ell = u+n$ and $\sqrt{2}\,  k = u-n$.
The previous boost  freedom becomes now simply
\begin{equation}
\ell \longrightarrow {\ell}'=e^\beta {\ell}, \hspace{1cm}
{k} \longrightarrow {k}'=e^{-\beta} {k} \label{free}
\end{equation}
so that the two independent null {\em directions} are uniquely determined on $S$.

The orientations of $\espaitemps$ and of the imbedded surface $(S,\bar g)$ will be chosen such that the operator $\star^\perp$ acts on the previous bases as follows
 $$\star^{\perp}u=n, \,\,\, \star^{\perp}n=u; \hspace{15mm}
\star^{\perp}\ell =\ell , \,\,\, \star^{\perp}k =-k .$$

\subsection{The mean curvature vector field $H$ and its causal character}
\label{subsec:H}
The shape tensor decomposes as
\be
\ii (X,Y)=-{K}_{k}(X,Y)\,\, \ell -{K}_{\ell}(X,Y)\,\,  k \label{shape}
\ee
in a null basis, or as
$$
\ii (X,Y)= -K_u (X,Y) \,\, u + K_n (X,Y) \,\, n \eqno{(3')}
$$
in any ON basis, $\forall X, Y\in \mathfrak{X}(S)$. Note that these formulae are invariant under the boost freedom (\ref{free0}-\ref{free}).
 The {\em mean curvature vector} $H\in \mathfrak{X}(S)^\perp$ is defined as the trace of the shape tensor with respect to $\bar g$, or explicitly 
\be
{H}= -(\mbox{tr}\,{A}_{k})\,\, \ell - (\mbox{tr}\, {A}_{\ell})\,\, k \, \label{mean}
\ee
in a null basis, or
$$
{H}= -(\mbox{tr}\,{A}_{u})\,\, u + (\mbox{tr}\, {A}_{n})\,\, n \eqno{(4')}
$$
in ON bases. In the physics literature, each component of $H$ along a particular normal direction $g(H, N)$= tr$A_{N}$ is termed ``expansion along $N$'' of $S$ \cite{BEE,Kr,S4}. In particular, tr$A_{\ell}$ and tr$A_{k}$ are called the {\em null expansions}.

 Notice that $H$ and $$\star^\perp H=-(\mbox{tr}\,{A}_{k})\,\, \ell + (\mbox{tr}\, {A}_{\ell})\,\, k= -(\mbox{tr}\,{A}_{u})\,\, n + (\mbox{tr}\, {A}_{n})\,\, u$$ are well-defined, they are invariant under the boost gauge freedom, and actually under arbitrary changes of basis. Observe also that $\star^\perp H$ is a (generically unique) direction with vanishing expansion: tr$A_{\star^\perp H}=0$. This fact is important in physics sometimes.
 
A very important type of surface (or submanifolds) in Riemannian geometry are the {\em minimal} ones. They are characterized by the vanishing of the mean curvature vector, that is, by the condition $H=0$. Observe, however, that in Riemannian geometry any vector can only have either zero or positive norm and, hence, the only distinguished case for $H$ is when it vanishes: the minimal surfaces. In semi-Riemannian geometry, though, vectors such as $H$ can realize all signs for $g(H,H)$, in particular $H$ can be timelike ($g(H,H)<0$) or null ($g(H,H)=0$), in the last case with $H\neq 0$. And these new cases provide new types of surfaces (and submanifolds) in the Lorentzian case.

Actually, the most important surfaces in Gravitation are defined according to such causal orientations of $H$. For instance, the simple condition 
$$
H^\flat \wedge (\star^\perp H)^\flat =0
$$
is equivalent to saying that $H$ is null everywhere on $S$. These will be called {\em null $\varhexstar$-surfaces} due to the nomenclature introduced in \cite{S4}, see also \cite{S7,Kh}. In the mathematical literature, surfaces with a null $H$ were considered for instance in \cite{R1,R2} for the Minkowski space-time under the name of ``pseudo-minimal'' or ``quasi-minimal'' surfaces, see also \cite{Chen}. Among  null $\varhexstar$-surfaces an important case is when $H$ (and hence $\star^\perp H$) points along one of the null directions $\ell$ or $k$ everywhere, then they are called {\em marginally outer trapped surfaces} (MOTS) (also called {\em null dual}). They have received a great deal of attention lately, in particular concerning their stability \cite{AMS,AMS1,AM,CM} which leads to the study of an elliptic operator similar to the stability operator for minimal surfaces. 

If in addition to $H$ pointing along one of the null normal directions its causal orientation does not change on $S$, that is to say, it is everywhere null future or everywhere null past, then $S$ is called a {\em marginally (future or past) trapped surface}, \cite{HE,Wald,MS,S4,S7}. For references concerning this type of surfaces in the mathematical literature one can consult the recent book \cite{Chen}. 

If on the other hand $H$ keeps its future (or past) causal orientation everywhere on $S$ (but it can change from null to timelike from point to point), then the surface is usually called {\em weakly (future or past) trapped} \cite{HE,Wald,MS,S4,S7}. If $H$ is also timelike non-zero all over $S$ then it is said to be (future or past) trapped \cite{P2,HE,HP,Wald,BEE,S4,Chen}. The concept of closed trapped surface ---here closed means compact with no boundary--- was introduced by Penrose \cite{P2} in a seminal paper where the first modern-type singularity theorem was proven. It was immediately realized that the concept of trapping is essential in many important developments concerning gravitational collapse and the formation of black holes, such as the singularity theorems \cite{HE,HP,S,GS}, the so-called ``cosmic censorship conjecture'' \cite{P6} together with the related subject of iso-perimetric or Penrose inequalities \cite{P5,G,G2,M}, and the hoop conjecture \cite{MTW,S6,CGP}.

\subsection{The extrinsic vector field $G$}
\label{subsec:G}
One can also define another normal vector field $G \in \mathfrak{X}(S)^{\perp}$ by using a second invariant of the matrices $A_N$. Unfortunately, there are no other {linear} invariants. In spite of that, for each $N \in \mathfrak{X}(S)^{\perp}$ one can set
$$
\sigma^2_N \equiv (\mbox{tr} A_N)^2-4\det {A}_{N}
$$
which is called the {\em shear} along $N$ \cite{HE,Wald}, and can also be expressed as the square of the difference of the two eigenvalues of $A_N$. An alternative formula is
$$
\det \left(A_N-\frac{1}{2}\mbox{tr} A_N  {\bf 1} \right)=-\frac{1}{4} \sigma^2_N .
$$
It should be noted that the matrix $A_N-\frac{1}{2}\mbox{tr} A_N  {\bf 1}$ is traceless, and therefore its two eigenvalues have opposite signs: $\sigma_N^{2}/4$ is the square of either of them. However, fixing the sign of $\sigma_N$ so that it becomes a differentiable function on $S$ is not free from ambiguities.\footnote{If one chooses, say, $\sigma_N$ to be the positive root of $\sqrt{\sigma_N^2}$ then it may fail to be differentiable at points where the two eigenvalues of $A_N$ coincide, that is, at points where $\sigma_N=0$. Of course, one can always set an ``initial'' condition for $G|_x$ at any point on $x\in S$, and then the differentiable solution for the vector field $G$ is fixed. Nevertheless, this initial condition is arbitrary.} Whatever the signs chosen, I set by definition
$$
\fbox{$\displaystyle{
G \equiv \sigma_k\,  \ell + \sigma_{\ell}\, k}$} 
%\label{det} 
$$
The two possible signs for each of $\sigma_k,\sigma_\ell$ provide four distinct possibilities for $G$ which define, by ignoring overall orientations, two orthogonal directions. However, these two directions can always be described by $G$ together with
$$\star^\perp G = \sigma_k\,  \ell - \sigma_{\ell}\, k$$
independently of the chosen signs. Observe that both $G$ and $\star^\perp G$ 
are invariant under the boost freedom (\ref{free}). 

It is important to remark that, due to the non-linearity of the invariants $\sigma^2_N$,
$$G\neq \sigma_u u-\sigma_n n$$ 
in general. There are points where the equality holds, and they will turn out to be precisely the umbilical points. 

The vector field $G$ is intimately related to the umbilical properties of a surface $S$, as I am going to prove presently.

\subsection{The normal connection one-form $s$}
For a fixed ON basis on $\mathfrak{X}(S)^\perp$, a one-form $s \in \Lambda^1(S)$ is defined by
$$
s (X) \equiv -g(u, D_X n) = g(D_X u, n), \hspace{1cm} \forall X \in \mathfrak{X}(S).
$$
For $\sqrt{2}\ell = u+n$ and $\sqrt{2} k =u-n$ one can alternatively write
$$
s (X) \equiv -g(k, D_X \ell) = g(D_X k, \ell), \hspace{1cm} \forall X \in \mathfrak{X}(S).
$$
Therefore, for all $X\in \mathfrak{X}(S)$
$$
D_X u =s(X) n, \, \, D_X n = s(X) u; \hspace{5mm} D_X\ell =s(X)\ell , \, \, D_X k = -s(X) k .
$$
Observe that $s$ is not invariant under boost rotations (\ref{free0}) or (\ref{free}). Actually, $s$ is a ``connection" and transforms as $s '(X) = s(X) + X(\beta)$ under those transformations, or simply
$$
s ' =s +d\beta.
$$
It follows that $ds=ds'$ is invariant and well-defined. It will be proven in the next subsection that this is actually related to the normal curvature on $S$, see formula (\ref{normalR}), confirming the connection character of $s$. In the mathematical literature on riemanniana geometry $S$ is sometimes called the {\em third fundamental form} of $S$ in $\espaitemps$, see e.g. \cite{YI}.

\subsection{Curvatures: Gauss and Ricci equations.}
The intrinsic curvature for $(S,\bar g)$ has the usual definition
$$
\overline{R} (X,Y) Z \equiv \overline\nabla_X\overline\nabla_Y Z-\overline\nabla_Y\overline\nabla_X Z -\overline\nabla_{[X,Y]} Z, \hspace{4mm} \forall X,Y,Z \in \mathfrak{X}(S).
$$
Similarly, the normal curvature is defined on $S$ by
$$
R^\perp (X,Y) N \equiv D_X D_Y N - D_Y D_X N -D_{[X,Y]} N, \hspace{4mm} \forall X,Y\in \mathfrak{X}(S), \hspace{4mm} \forall N\in \mathfrak{X}(S)^\perp .
$$
A simple calculation provides
\be
R^\perp (X,Y) N = ds (X,Y)\,  \star^\perp\! N , \hspace{1cm} \forall X,Y \in \mathfrak{X}(S), \,\, \, \, \forall N\in \mathfrak{X}(S)^\perp . \label{normalR}
\ee
This justifies that $s$ describes the normal connection and that $ds$ defines its curvature.

The Gauss equation relating the curvatures of $(S,\bar g)$ and $\espaitemps$ can be written as

%and for all ON basis $\{u,n\}$ (or all null bases $\{\ell,k\}$) on $\mathfrak{X}(S)^\perp$
%{\footnotesize
\be
R(W,Z,X,Y) = \overline{R}(W,Z,X,Y)+g\left(\ii (X,Z),\ii (Y,W) \right)-g\left(\ii (Y,Z),\ii (X,W) \right)
\label{gauss}
\ee%}
for all $X,Y,Z,W\in \mathfrak{X}(S)$, where I use the notation \footnote{Notice the sign convention, which may not coincide with the preferred one for everybody.}
$$
R(W,Z,X,Y) \equiv g(W,R(X,Y)Z)
$$
and analogously for $\overline{R}$. 
%\bean
%\left(R(X,Y) Z \right)^T = \overline{R} (X,Y) Z +K_n(X,Z)A_n(Y)-K_n(Y,Z)A_n(X)\\
%-K_u(X,Z)A_u(Y)+K_u(Y,Z)A_u(X) \\
%= \overline{R} (X,Y) Z -K_\ell(X,Z) A_k(Y)+K_\ell(Y,Z)A_k(X)\\
%-K_k(X,Z)A_\ell(Y) +K_k(Y,Z)A_\ell(X)
%\eean}
However, as $S$ is 2-dimensional its curvature is uniquely determined by its Gaussian curvature $K(S)$. Therefore, the previous relation can be written as a single scalar equation. 
To that end, let me define a new extrinsic object, quadratic in the shape tensor $\ii$, as follows.
For any ON basis $\{e_1,e_2\}$ in $\mathfrak{X}(S)$, set by definition 
$$\mathbb{J}(X,Y)\equiv \sum_{i=1}^2 g\left(\ii(e_i,X),\ii(e_i,Y) \right), \hspace{1cm} \forall X,Y \in \mathfrak{X}(S)\, .$$ 
$\mathbb{J}(X,Y)$ is a 2-covariant symmetric tensor field on $S$. Then, define $B\, : \mathfrak{X}(S)\rightarrow \mathfrak{X}(S)$ by
$$
g(B(X),Y) \equiv \mathbb{J} (X,Y), \hspace{1cm} \forall X,Y \in \mathfrak{X}(S) \, .
$$
$B$ is sometimes called {\it the Casorati operator} of $S$ in $\espaitemps$ \cite{Chen}, and has been mainly studied in the Riemannian case, see e.g. \cite{DHV,HKV} and references therein. In the Lorentzian case under consideration in this paper, a straightforward calculation allows one to check that $B$ is the anticommutator of the two null Weingarten operators:
\be
B=-\left\{A_k ,A_\ell\right\}.
\label{B}
\ee
Once more, let me remark that $B$ is invariant under the boost freedom (\ref{free}). Observe furthermore that
$$
\mbox{tr} B =g(\ii ,\ii)
$$
which is sometimes called the Casorati curvature \cite{Chen}.

With the previous notation the Gauss equation (\ref{gauss}) becomes
\be
2\, K(S) = {\cal S} -4\,  \mbox{Ric} (\ell , k) +2 R(\ell,k,\ell,k)+ g(H,H) -\mbox{tr}\, B \label{gauss2}
\ee 
where $\mbox{Ric}$ and ${\cal S}$ are the Ricci tensor and the scalar curvature of $\espaitemps$.

%\begin{frame}{Forms of the Codazzi equation}
%For all $X,Y,Z\in \mathfrak{X}(S)$, for all $N\in \mathfrak{X}(S)^\perp$, and for all ON basis $\{u,n\}$ (or all null bases $\{\ell,k\}$) on $\mathfrak{X}(S)^\perp$
%\bean
%\left(R(X,Y) Z \right)^\perp &=& \left(\overline\nabla_X +D_X \right)\ii (Y,Z) - \left(\overline\nabla_Y +D_Y \right)\ii (X,Z)\\ 
%&=&\left\{-\overline\nabla_X K_u (Y,Z) +s (X) K_u(Y,Z)\right. \\
%&& \left. + \overline\nabla_Y K_u (X,Z) -s (Y) K_u(X,Z)\right\} u\\
%&&+ \left\{\overline\nabla_X K_n (Y,Z) -s (X) K_n(Y,Z)\right. \\
%&& \left. - \overline\nabla_Y K_n (X,Z) +s (Y) K_n(X,Z)\right\} n \\ 
%&=&\left\{-\overline\nabla_X K_\ell (Y,Z) -s (X) K_k(Y,Z)\right. \\
%&& \left. + \overline\nabla_Y K_\ell (X,Z) +s (Y) K_k(X,Z)\right\} k\\
%&&- \left\{\overline\nabla_X K_k (Y,Z) -s (X) K_\ell(Y,Z)\right. \\
%&& \left. - \overline\nabla_Y K_k (X,Z) +s (Y) K_\ell(X,Z)\right\} \ell
%\eean 
%\bean
%\left(R(X,Y) N \right)^T = -\overline\nabla_X\left(A_N (Y) \right)+\overline\nabla_Y \left(A_N(X)\right)\\
%-A_{D_Y N} (X) +A_{D_X N} (Y) + A_N\left([X,Y]\right)
%\eean
%\end{frame}

With regard to the Ricci equation, relating the normal curvature $R^\perp$ with the tangent-normal part of the spacetime curvature $R$ on $S$, one can write
%and for all ON basis $\{u,n\}$ (or all null bases $\{\ell,k\}$) on $\mathfrak{X}(S)^\perp$
\bean
\left(R(X,Y) N \right)^\perp &=& \ii \left(X,A_N (Y) \right)-\ii  \left(Y,A_N(X)\right)+R^\perp(X,Y)N\\
&=& \ii \left(X,A_N (Y) \right)-\ii  \left(Y,A_N(X)\right)+ds (X,Y) \star^\perp\! N
\eean
for all $X,Y\in \mathfrak{X}(S)$ and for all $N,M \in \mathfrak{X}(S)^\perp$, where in the last equality I have used (\ref{normalR}). An alternative possibility, which will reveal itself as very useful in the sequel, is
\be
R(M,N,X,Y)= g\left(\left[A_M , A_N\right](Y),X\right)+ds (X,Y)\, \,  g(\star^\perp N,M), 
\label{ricci}
\ee
for all $X,Y\in \mathfrak{X}(S)$ and all $N,M \in \mathfrak{X}(S)^\perp $.

\section{Umbilical-type, pseudo-umbilical, and related surfaces}
The concept of umbilical point is classical in semi-Riemannian geometry. When the co-dimension of a submanifold is higher than one, then there are several possible directions along which a point can be umbilic. Specifically
\begin{definition}[Umbilical points on $S$]
A point $x\in S$ is called {\em umbilical} with respect to $N|_x \in T_x^\perp S$ (or simply {$N$-umbilical}) if the corresponding Weingarten operator is proportional to the Identity
$$
A_N|_x = \frac{1}{2} F\, {\bf 1} .
$$
Obviously, in that case $F = \mbox{tr} A_N|_x = g(H,N|_x)$ necessarily. An equivalent characterization is
$$K_N |_x =  \frac{1}{2} g(H,N|_x)\,  \bar g |_x.$$
\end{definition}
\begin{definition}[$N$-Umbilical surfaces]
Thus, $S$ is said to be {\em umbilical along a vector field} $N\in \mathfrak{X}(S)^\perp$ if 
\be
A_N =  \frac{1}{2} g(H,N) {\bf 1}\label{umb}
\ee
or equivalently, if
$K_N = \frac{1}{2} g(H,N) \, \bar g$.
\end{definition}
This concept was studied in the Riemannian case many years ago under some special circumstances, e.g. \cite{CY1,CY2}, see \cite{Chen} for the general semi-Riemannian case.

Observe that {\em minimal surfaces}, that is those with zero mean curvature vector $H=0$, can be considered as a limit case of $N$-umbilical  surfaces only in the case that the whole Weingarten operator vanishes $A_N=0$. This motivates the following definition \cite{S4}.
\begin{definition}[$N$-subgeodesic surface]
A spacelike surface $S$ is called {\em $N$-subgeodesic}, for $N\in \mathfrak{X}(S)^\perp$, if $A_{\star^\perp N}=0$.
\end{definition}
This means that {\em any} geodesic $\gamma : I\subset \mathbb R \longrightarrow S$ of the surface $(S,\bar g)$ is a sub-geodesic \cite{Scho} with respect to $N$ on the spacetime $\espaitemps$: its tangent vector $\gamma\, '$ satisfies the relation 
$$\nabla_{\gamma\, '} \gamma\, ' =f N$$
where the function $f$ on $\gamma$ is fully determined by the relation 
$f N= \ii (\gamma\, ', \gamma\, ')$.

Obviously, a surface is sub-geodesic with respect to two independent normal vector fields $N$ and $M$ ($N^\flat \wedge M^\flat\neq 0$) if and only if it is {\em totally geodesic} ($\ii =0$) \cite{O}, or equivalently, if and only if $A_N =0, \, \, \forall N\in \mathfrak{X}^\perp (S)$.

\begin{remark}
\label{A-sub}
In traditional Riemannian geometry there is the concept of {\it first normal space} ${\cal N}_1$ for immersed submanifolds $S$, defined at each $p\in S$ by 
$${\cal N}_1=\mbox{Span}\{\ii (X,Y); \, X, Y \in T_pS\}.$$
This generalizes immediately to the general semi-Riemannian case, and then $N$-subgeodesic surfaces have dim${\cal N}_1\leq 1$, because $\ii (X,Y)^\flat \wedge  N^\flat =0$ for all $X,Y\in \mathfrak{X}(S)$. Actually, $N$-subgeodesic surfaces are characterized by
$$
\ii (X,Y)^\flat \wedge \ii (Z,W)^\flat =0 \hspace{1cm} \forall X,Y,Z,W \in \mathfrak{X} (S) 
$$
and then the direction $N$ can be determined by computing $\ii (X,X)$ for any $X\in \mathfrak{X} (S)$ such that $\ii (X,X)\neq 0$---and whenever $S$ is not totally geodesic, of course.
In other words, all possible second fundamental forms, or all the Weingarten operators, are proportional to each other as follows from the fact that $\ii (X,Y) = K(X,Y) N$ for some fixed \footnote{Up to proportionality factors, this $K$ coincides with $K_{N}$ if $N$ is non-null. If $N$ is null, then $N$ can be chosen to be either $\ell$ or $k$, and $K$ is $-K_k$ or $-K_\ell$, respectively.} rank-2 symmetric covariant tensor field $K$ in $S$. Given that all submanifolds with  dim${\cal N}_1\leq 1$ are trivial $A$-submanifolds, a concept introduced in \cite{Chen0} for Riemannian manifolds ---see also \cite{D,H,Rou} and references therein for some simple Lorentzian cases---, then $N$-subgeodesic surfaces are in particular trivial $A$-submanifolds. 
\qed
\end{remark}

A standard possibility for umbilical surfaces in sub-manifolds with co-dimension higher than one is that the umbilical direction is given by the mean curvature vector. These are called pseudo-umbilical surfaces \cite{Chen}.
\begin{definition}[Pseudo-umbilical surface]
$S$ is said to be {pseudo-umbilical} if it is umbilical with respect to $N=H$, so that $$A_H =  \frac{1}{2} g(H,H) \, \bf{1} .$$
\end{definition}
In Riemannian geometry this kind of submanifolds have been studied since long ago, see e.g. \cite{Ot} and specifically  \cite{YI} for the co-dimension 2 situation. Probably the first study in the semi-Riemannian  case was performed in \cite{R0} and then only much later in \cite{KK,Sun}. Some results concerning pseudo-umbilical submanifolds in semi-Riemannian geometry can be consulted in \cite{AER,BE,Cao,HJN,SP,Chen}, not much of it specific for Lorentzian geometry. Thus, as far as I am aware, very few things are known for pseudo-umbilical surfaces in general Lorentzian manifolds.

Less common is the idea of $S$ being umbilical along the {\em unique} direction orthogonal to $H$ in $\mathfrak{X}(S)^\perp$. Actually,  this idea does not appear to have been considered previously, so that the following definition is new and I made the name for this type of surface up ---maybe not too skillfully.
\begin{definition}[Ortho-umbilical surface]
A surface $S$ will be called {\em ortho-umbilical} if it is umbilical with respect to $N=\star^\perp H$, so that 
$$A_{\star^\perp H}=0.$$
\end{definition}
As a matter of fact, for the case of co-dimension two under consideration one can prove the following equivalence between ortho-umbilical and $N$-subgeodesic surfaces.
\begin{proposition}
\label{equiv}
The following conditions are equivalent for a non-minimal spacelike surface $S$ in $\espaitemps$:
\begin{enumerate}
\item $S$ is ortho-umbilical
\item $S$ is $N$-subgeodesic for some $N\in \mathfrak{X}(S)^\perp$
\item $S$ is $H$-subgeodesic
\end{enumerate}
\end{proposition}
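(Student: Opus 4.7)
The plan is to treat the equivalence of (1) and (3) as essentially tautological, and then reduce (2) $\Leftrightarrow$ (3) to a single observation about the one-dimensional orthogonal complement of $\star^\perp N$ inside the normal bundle.

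For (1) $\Leftrightarrow$ (3), I observe that $\star^\perp H$ is by construction orthogonal to $H$, so $g(H,\star^\perp H)=0$. Thus the ortho-umbilical condition (\ref{umb}) applied with $N=\star^\perp H$ collapses to $A_{\star^\perp H}=0$, which is precisely the $H$-subgeodesic condition. The implication (3) $\Rightarrow$ (2) is immediate upon choosing $N=H$, a valid choice because $S$ is non-minimal and hence $H\neq 0$.

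For (2) $\Rightarrow$ (3), suppose $A_{\star^\perp N}=0$ for some nowhere-vanishing $N\in\mathfrak{X}(S)^\perp$. The identity $\bar g(A_{\star^\perp N}(X),Y)=g(\ii(X,Y),\star^\perp N)$ forces $\ii(X,Y)$ to lie in the $g$-orthogonal complement of $\star^\perp N$ inside the two-dimensional normal bundle for every $X,Y\in\mathfrak{X}(S)$. This complement is one-dimensional and, crucially, spanned by $N$ itself in every causal case: the relation $g(N,\star^\perp N)=0$ holds whether $N$ is spacelike, timelike, or null, since in the null case $\star^\perp N\propto N$ and the null vector $N$ is self-orthogonal. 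Hence $\ii(X,Y)\in\mathrm{span}(N)$ for all tangent $X,Y$, and tracing yields $H\in\mathrm{span}(N)$. Non-minimality upgrades this to $H$ being a nowhere-vanishing multiple of $N$; applying $\star^\perp$ gives $\star^\perp H\propto\star^\perp N$, and therefore $A_{\star^\perp H}$ is a scalar multiple of $A_{\star^\perp N}=0$, proving (3).

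The main point requiring attention is the uniform treatment of the null and non-null cases: when $N$ is non-null, $\{N,\star^\perp N\}$ is a genuine basis of $\mathfrak{X}(S)^\perp$ and the orthogonal decomposition is entirely standard, whereas when $N$ is null, $\star^\perp N$ is parallel to $N$ and the naive splitting degenerates. The single observation $g(N,\star^\perp N)=0$, valid in all three causal classes, together with the one-dimensionality of orthogonal complements in the normal bundle, bypasses this degeneracy at no cost and is what makes the argument uniform.
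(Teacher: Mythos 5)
Your proof is correct and follows essentially the same route as the paper's: (1) and (3) are identified by definition since $g(H,\star^\perp H)=0$, (3)$\Rightarrow$(2) is the trivial choice $N=H$ (legitimate by non-minimality), and (2)$\Rightarrow$(1)/(3) rests on the same key fact the paper uses, namely that the only normal direction orthogonal to $\star^\perp N$ is $N$ itself (valid in all causal cases), which forces $\ii(X,Y)\in\mathrm{span}(N)$, hence $H\propto N$ and $A_{\star^\perp H}\propto A_{\star^\perp N}=0$. The only cosmetic difference is that you argue directly with the one-dimensional orthogonal complement of $\star^\perp N$ in the normal bundle, whereas the paper introduces an auxiliary normal $M$, decomposes $\ii=K_1N+K_2M$, and kills $K_2$ via $g(\star^\perp N,M)\neq 0$.
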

\begin{proof}[of Theorem \ref{th:1}]
\smartqed

1 {\Large $\Longrightarrow$} 3 \hspace{0.2cm} Assume $S$ is ortho-umbilical and $H\neq 0$. This means that $A_{\star^\perp H}=0$ which is the definition of $H$-subgeodesic.

3 {\Large $\Longrightarrow$} 2 \hspace{0.2cm} Trivial

2 {\Large $\Longrightarrow$} 1 \hspace{0.2cm} If $S$ is $N$-subgeodesic then $A_{\star^\perp N}=0$, so that $K_{\star^\perp N}=0$ too. Let $M\in \mathfrak{X}(S)^\perp$ be any normal vector field such that $N^\flat \wedge M^\flat\neq 0$, ergo span$\{N,M\}=\mathfrak{X}(S)^\perp$. In the basis $\{N,M\}$ one obviously has
$$
\ii (X,Y) = K_1(X,Y) N + K_2(X,Y) M, \hspace{1cm} \forall X,Y \in \mathfrak{X} (S) 
$$
for some rank-2 covariant symmetric tensor fields $K_1,K_2$ on $S$. As a matter of fact $K_1$ and $K_2$ are determined by $K_N$ and $K_M$ as follows: $K_N = g(N,N)K_1 + g(N,M)K_2 $ and $K_M= g(N,M) K_1 + g(M,M)K_2 $. The property $K_{\star^\perp N}=0$ implies, on using that by definition $N$ and $\star^\perp N$ are mutually orthogonal, that 
$$
g(\star^\perp N,M) K_2 (X,Y)=0 \hspace{1cm} \forall X,Y \in \mathfrak{X} (S) \, .
$$
Notice, however, that $g(\star^\perp N,M)\neq 0$ because the unique direction in $ \mathfrak{X}^\perp (S) $  orthogonal to $\star^\perp N$ is actually $N$, and the choice of $M$ prevents that $M$ and $N$ be proportional. Thus, necessarily $K_2 =0$ implying that $\ii (X,Y) = K_1(X,Y) N $ for all $X,Y \in \mathfrak{X} (S)$ and, as a consequence, that 
$$H=\mbox{tr}A_1 N$$
where $A_1 : \mathfrak{X} (S) \rightarrow \mathfrak{X} (S)$ is an operator {\it \`a la} Weingarten associated to $K_1$, that is to say, defined by $\bar g(A_1(X),Y)=K_1(X,Y)$ for all $X,Y \in \mathfrak{X} (S)$. As $S$ is non-minimal $H\neq 0$ and thus tr$A_1 \neq 0$. Therefore one finally arrives at
\be
\ii (X,Y) = \frac{1}{\mbox{tr}A_1} K_1 (X,Y) H \hspace{1cm} \forall X,Y \in \mathfrak{X} (S) 
\label{orto}
\ee
and thus $K_{\star^\perp H}=0$ from which $A_{\star^\perp H}=0$ follows.
\qed 
\end{proof}
\begin{remark}
\label{ortho=sub}
As a consequence, and due to Remark \ref{A-sub}, ortho-umbilical surfaces are trivial $A$-surfaces, and have {\em all} Weingarten operators proportional to each other with dim${\cal N}_1= 1$ (unless $S$ is totally geodesic, in which case of course dim${\cal N}_1= 0$). 
\qed 
\end{remark}
\begin{example}
Taking into account that the concept of ortho-umbilical $S$ seems to be new, I present some simple examples. Takle $\varietat = \mathbb{R} \times \Sigma$ for a 3-dimensional manifold $\Sigma$ and let $g=\mp dt^2\oplus g_{\Sigma_\pm}$ where $g_{\Sigma_\pm}$ is a Riemannian (+) or Lorentzian (-)  metric on $\Sigma$, so that $\espaitemps$ is a Lorentzian manifold. Now, take an arbitrary (spacelike) surface $S$ immersed in $\Sigma$. If $K$ is the second fundamental form of $S$ in $(\Sigma,g_{\Sigma_\pm})$ (with respect to the unit normal $m$ of $S$ in $(\Sigma,g_{\Sigma_\pm})$), then the shape tensor of $S$ in $\espaitemps$ can be easily shown to take the form $\ii (X,Y)= \pm K(X,Y) M$ for all $X,Y\in \mathfrak{X} (S)$, where $M\in \mathfrak{X}^\perp (S)$ is the normal that corresponds to $m$. The mean curvature vector $H$ is then proportional to $M$ and $S$ happens to be umbilical with respect to $\star^\perp M$, that is, ortho-umbilical (and also $M$-subgeodesic).
\qed
\end{example}
A surface can be pseudo-umbilical and ortho-umbilical at the same time. This can only happen in some special cases with a null $H$, to be enumerated and derived rigorously later in Remark \ref{pseudoandortho},  
or in the traditional cases of minimal surfaces or of totally umbilical surfaces, which is a particular case of the above and can be defined as:
\begin{definition}[Totally umbilical surfaces]
$S$ is called {totally umbilical} if it is umbilical with respect to all possible $N\in \mathfrak{X}(S)^\perp$:
$$
\forall N\in \mathfrak{X}(S)^\perp ,  \hspace{4mm} A_N = \frac{1}{2} g(H,N) \, {\bf 1} .
$$
Equivalently, 
\be
\ii (X,Y) = \frac{1}{2}  \bar g(X,Y) H , \hspace{1cm} \forall X,Y \in \mathfrak{X}(S) \, . \label{totumb}
\ee
\end{definition}
This provides a preliminary interpretation of the vector field $G$.
\begin{proposition}\label{prop:1}
Totally umbilical surfaces can be characterized by
$$
G=0 .$$
\end{proposition}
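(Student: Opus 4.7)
The plan is to prove the equivalence directly from the definitions, using the fact that on a $2$-dimensional Riemannian vector space any self-adjoint operator whose two (real) eigenvalues coincide must be a multiple of the identity.

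For the forward direction, assume $S$ is totally umbilical. Then for every $N \in \mathfrak{X}(S)^\perp$, $A_N = \tfrac{1}{2} g(H,N)\,\mathbf{1}$, so both eigenvalues of $A_N$ equal $\tfrac{1}{2} g(H,N)$, which gives $\sigma_N^2 = (\operatorname{tr} A_N)^2 - 4\det A_N = 0$. In particular $\sigma_k = \sigma_\ell = 0$, hence $G = \sigma_k \ell + \sigma_\ell k = 0$.

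For the converse, assume $G = 0$. Because $\ell$ and $k$ are linearly independent, this forces $\sigma_k = \sigma_\ell = 0$, i.e.\ $\sigma_k^2 = \sigma_\ell^2 = 0$. Since $\sigma_N^2$ is the square of the difference of the two eigenvalues of $A_N$, the operators $A_k$ and $A_\ell$ each have a double eigenvalue; being self-adjoint with respect to the positive-definite $\bar g$ they are therefore diagonalisable, and a diagonal matrix with equal entries is a scalar multiple of the identity. Thus $A_k = \tfrac{1}{2}(\operatorname{tr} A_k)\,\mathbf{1}$ and $A_\ell = \tfrac{1}{2}(\operatorname{tr} A_\ell)\,\mathbf{1}$.

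The final step is to upgrade this from the null pair $\{\ell,k\}$ to every $N \in \mathfrak{X}(S)^\perp$. Writing an arbitrary $N = a\ell + b k$ and using the linearity of $N \mapsto A_N$ (immediate from the definition via $\nabla_X N$), one obtains
\[
A_N = a A_\ell + b A_k = \tfrac{1}{2}\bigl(a\operatorname{tr}A_\ell + b\operatorname{tr}A_k\bigr)\mathbf{1}.
\]
A short check using (\ref{mean}) and $g(\ell,k) = -1$ shows $g(H,N) = a\operatorname{tr}A_\ell + b\operatorname{tr}A_k = \operatorname{tr} A_N$, so $A_N = \tfrac{1}{2} g(H,N)\,\mathbf{1}$ for every $N$, i.e.\ $S$ is totally umbilical. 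There is no real obstacle here; the only subtle point is the sign ambiguity of $\sigma_k$ and $\sigma_\ell$ alluded to in the footnote, but it is harmless because the vanishing of $G$ forces the vanishing of $\sigma_k^2$ and $\sigma_\ell^2$, which is sign-independent.
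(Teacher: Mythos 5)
Your proof is correct and takes essentially the same route as the paper: $G=0$ forces $\sigma_k=\sigma_\ell=0$, hence $A_k$ and $A_\ell$ are multiples of the identity, and the passage to all normal directions (which you do by linearity of $N\mapsto A_N$, while the paper equivalently reassembles the shape tensor via (\ref{shape}) and (\ref{mean}) to obtain (\ref{totumb})) is immediate. Your explicit handling of the sign ambiguity of $\sigma_k,\sigma_\ell$ is a nice touch but does not change the argument.
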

\begin{proof}
\smartqed
If $S$ is totally umbilical then $A_N =(1/2) g(H,N)\, {\bf 1}$ for all $N\in \mathfrak{X}(S)^\perp$, in particular $A_k=(1/2) g(H,k)\, {\bf 1}$ and $A_\ell =(1/2) g(H,\ell)\, {\bf 1}$ so that $\sigma_k =\sigma_\ell =0$ and thus $G=0$. Conversely, if $G=0$ then $\sigma_k =\sigma_\ell =0$ and thus
$$
K_\ell = \frac{1}{2} \mbox{tr} A_\ell\,Ê\,  \bar g \, \, ; \hspace{2cm} K_k = \frac{1}{2} \mbox{tr} A_k \,Ê\, \bar g .
$$
Using now formulas (\ref{shape}) and (\ref{mean}) one derives (\ref{totumb}).
\qed
\end{proof}
In what follows I am going to prove that, letting this case aside, the umbilical direction, if it exists, is always given by either $G$ or $\star^\perp G$ (Corollary \ref{coro:2}).

\section{Proof of the Main Theorems}
We are now ready to proof Theorems \ref{th:1} and \ref{th:2}. 
\begin{remark}
\label{points}
The results of the theorems can be stated at a point $x\in S$. For instance, ``a point $x\in S$ is $N$-umbilical if and only if two independent Weingarten operators commute at $x$''. However, for the sake of simplicity I am going to omit the sub-index $x$, and therefore the proofs are valid for the entire surface and in accordance with their form presented in the Introduction. One should keep in mind, though, that the result may be valid only at some points of the surface in general.
 \qed
 \end{remark}
\begin{proof}[of Theorem \ref{th:1}]
\smartqed

{\Large $\Longrightarrow$} \hspace{1cm} Assume that $N\in \mathfrak{X}(S)^\perp$ is an umbilical direction. In (say) the null basis $N=-g(N,k) \ell -g(N,\ell) k$ and the umbilical condition (\ref{umb}) can be written as
\be
-g(N,k)\,  A_\ell - g(N,\ell) \, A_k = \frac{1}{2} g(H,N) \, {\bf 1} \, .\label{umb2}
\ee
By taking here the commutator with $A_\ell$, or with $A_k$, one immediately derives (for $N\neq 0$):
$$
[A_\ell , A_k]=0 \, .
$$
Now, all possible Weingarten operators are linear combinations of any two of them, that is, for any $M\in \mathfrak{X}(S)^\perp$ there exist scalars $a$ and $b$ such that
$$A_M =a A_k + b A_\ell$$
and therefore 
$$
\left[A_M, A_{\tilde M}\right] =0, \hspace{4mm} \forall M,\tilde{M}\in \mathfrak{X}(S)^\perp \, .
$$

{{\Large $\Longleftarrow$}} \hspace{1cm} Conversely, assume that $[A_\ell , A_k]=0$. This implies that there exists a common ON eigen-basis such that both $A_\ell$ and $A_k$ are diagonal.
Let $\{\lambda_1,\lambda_2\}$ and $\{\nu_1 ,\nu_2\}$ denote the corresponding eigenvalues for $A_k$ and $A_\ell$, respectively.
Then, the equation (\ref{umb2}) to determine the umbilical direction $N$ becomes in this eigen-basis
\be
-g(N,\ell)\left(\begin{array}{cc}
\lambda_1 & 0 \\
0 & \lambda_2 \end{array} \right) - g(N,k) \left(\begin{array}{cc}
\nu_1 & 0 \\
0 & \nu_2 \end{array} \right) = \frac{1}{2}  g(H,N) \left(\begin{array}{cc}
1 & 0 \\
0 & 1 \end{array} \right) . \label{system}
\ee
Introducing here that
$$
g(H,N)=-g(N,\ell) \mbox{tr}A_k - g(N,k)\mbox{tr} A_\ell 
= -g(N,\ell)(\lambda_1 +\lambda_2)  - g(N,k)(\nu_1+\nu_2)
$$
the system of equations (\ref{system}) collapses to a \underline{single} equation
$$
g(N,\ell)(\lambda_1-\lambda_2) +g(N,k)(\nu_1 -\nu_2) =0.
$$
Its solution is clearly unique (up to proportionality factors) and explicitly given by $g(N,k)=-\lambda_1+\lambda_2$ and $g(N,\ell)=\nu_1 -\nu_2$, that is to say
\be
\fbox{$\displaystyle{
N_{umb} = (\lambda_1-\lambda_2)\ell - (\nu_1 -\nu_2) k} $}
\label{Numb}
\ee
unless $\lambda_1-\lambda_2=\nu_1 -\nu_2=0$, in which case the surface is totally umbilical as proven in the next Corollary \ref{coro:2}. 
\qed
\end{proof}
\begin{remark}
As a consequence, there exists a (generically unique) ON basis in which \underline{all} possible Weingarten operators diagonalize simultaneously.
\qed
\end{remark}
\begin{corollary}\label{coro:2}
The unique umbilical direction $N_{umb}$ is given, at each $x\in S$, either by $G|_x$ or $\star^\perp G|_x$.
\end{corollary}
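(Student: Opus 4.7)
The plan is to read off the claim directly from the explicit formula for $N_{umb}$ obtained at the end of the proof of Theorem \ref{th:1}, and then match it against the definition of $G$ together with that of $\star^\perp G$.

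Recall from that proof that, in a common ON eigen-basis of the commuting operators $A_k$ and $A_\ell$, with respective eigenvalues $\{\lambda_1,\lambda_2\}$ and $\{\nu_1,\nu_2\}$, the umbilical direction is
$$
N_{umb}=(\lambda_1-\lambda_2)\,\ell-(\nu_1-\nu_2)\,k .
$$
The first step is to connect the coefficients $\lambda_1-\lambda_2$ and $\nu_1-\nu_2$ with the invariants $\sigma_k$ and $\sigma_\ell$. Using the characterization of $\sigma_N^{2}$ as $(\mathrm{tr}\,A_N)^{2}-4\det A_N$, which equals the square of the difference of the two eigenvalues of $A_N$, one has $\sigma_k^{2}=(\lambda_1-\lambda_2)^{2}$ and $\sigma_\ell^{2}=(\nu_1-\nu_2)^{2}$. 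Hence there exist signs $\varepsilon_k,\varepsilon_\ell\in\{+1,-1\}$ such that $\lambda_1-\lambda_2=\varepsilon_k\sigma_k$ and $-(\nu_1-\nu_2)=\varepsilon_\ell\sigma_\ell$, so
$$
N_{umb}=\varepsilon_k\,\sigma_k\,\ell+\varepsilon_\ell\,\sigma_\ell\,k .
$$

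The second step is to recognize the right-hand side as one of the four representatives of the two directions spanned by $G$ and $\star^\perp G$. Indeed, fixing (arbitrarily) signs for $\sigma_k$ and $\sigma_\ell$, the definition gives $G=\sigma_k\ell+\sigma_\ell k$ and $\star^\perp G=\sigma_k\ell-\sigma_\ell k$. If $\varepsilon_k\varepsilon_\ell=+1$ then $N_{umb}=\pm G$, while if $\varepsilon_k\varepsilon_\ell=-1$ then $N_{umb}=\pm\star^\perp G$; in either case $N_{umb}$ is proportional to $G|_x$ or to $\star^\perp G|_x$, which is what the corollary asserts. No extra calculation is needed because the whole argument is algebraic at the point $x$, in accordance with Remark \ref{points}.

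Finally, one has to dispose of the degenerate case. The formula for $N_{umb}$ above vanishes iff $\lambda_1=\lambda_2$ and $\nu_1=\nu_2$, i.e.\ iff $\sigma_k=\sigma_\ell=0$, which by Proposition \ref{prop:1} is precisely the totally umbilical situation; outside of that case exactly one of $G|_x$, $\star^\perp G|_x$ is the umbilical direction (both are non-zero and mutually orthogonal, and $N_{umb}$ selects one of them via the signs $\varepsilon_k,\varepsilon_\ell$). The only mildly subtle point is the bookkeeping of signs in the definition of $G$, but since the statement of the corollary only fixes the \emph{direction}, the sign ambiguity discussed in the footnote of Section \ref{subsec:G} is immaterial.
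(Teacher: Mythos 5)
Your proposal is correct and follows essentially the same route as the paper: identify $(\lambda_1-\lambda_2)^2=\sigma_k^2$ and $(\nu_1-\nu_2)^2=\sigma_\ell^2$, read off from the explicit formula for $N_{umb}$ that it is $\pm G|_x$ or $\pm\star^\perp G|_x$ up to the sign ambiguity in $\sigma_k,\sigma_\ell$, and dismiss the vanishing case via Proposition \ref{prop:1}. The only cosmetic remark is that your parenthetical ``exactly one of $G|_x$, $\star^\perp G|_x$'' is slightly overstated when $N_{umb}$ is null (there the two coincide as directions), but this does not affect the statement being proved.
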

\begin{proof}
\smartqed
It is straightforward to note that 
$$
(\lambda_1-\lambda_2)^2 = \sigma^2_k , \hspace{1cm} (\nu_1 -\nu_2)^2 =\sigma^2_\ell
$$
so that the unique solution (\ref{Numb}) for $N_{umb}$, {at each} $x\in S$, is either $\pm G|_{x}$ or $\pm \star^\perp G|_{x}$.
The only exceptional case is defined by $N_{umb} =G=0$, but this characterizes the totally umbilical case, as follows from Proposition \ref{prop:1}.
\qed
\end{proof}
Under the hypothesis of this corollary and Theorem \ref{th:1} one can also use the formula 
$G=\sigma_u u -\sigma_n n$ which does not hold in general. This is due to the commutativity property of all Weingarten operators in this case.

The causal character of the umbilical direction can be easily sorted out due to the explicit formula (\ref{Numb}), which allows us to compute
\be
g\left(N_{umb},N_{umb}\right)= 2(\lambda_1-\lambda_2)(\nu_1 -\nu_2) = 4\, \mbox{tr}(A_k A_\ell)-2\, \mbox{tr}A_k\, \mbox{tr}A_\ell . \label{causal}
\ee
Using here the expression (\ref{B}) for $B$, this can be invariantly rewritten as
$$
g\left(N_{umb},N_{umb}\right)=g(H,H)-2\, \mbox{tr}\,  B \, .
$$
Thus, the following criteria provide the causal character of the umbilical direction if it exists:
$$
g(H,H)-2\, \mbox{tr}\,  B \left\{\begin{array}{llc}
<0 & \Rightarrow N_{umb} & \mbox{is timelike} \\
>0 & \Rightarrow N_{umb} & \mbox{is spacelike}  \\
=0 & \Rightarrow N_{umb} & \mbox{is null}  \\
\end{array}
 \right.
$$
An alternative way of expressing the same utilizes the {\em ordered} eigen-bases for $A_\ell$ and $A_k$, where ordered means for instance that the first eigenvector corresponds to the larger eigenvalue. This has some relevance concerning the classification presented in \cite{S4}. Thus, from (\ref{causal})
 $$N_{umb}\,\,\, \mbox{is} \left\{\begin{array}{lll}
\mbox{spacelike} & \mbox{if the ordered eigen-bases of $A_\ell$ and $A_k$} & \mbox{agree}\\
\mbox{timelike} & \mbox{if the ordered eigen-bases of $A_\ell$ and $A_k$} & \mbox{are opposite}\\
\mbox{null} & \mbox{if one of the eigen-bases of $A_\ell$ or $A_k$} & \mbox{cannot be ordered}
\end{array}
\right.
$$

Let us now prove the second main theorem that, with the introduced notation, can be reformulated as:
\begin{remark}[{\bf Reformulation of Theorem \ref{th:2}}]
\label{reformulation}
{\em The necessary and sufficient condition for $S$ to be umbilical along a normal direction is 
$$
R^\perp (X,Y) N = \left(R(X,Y) N\right)^\perp , \hspace{1cm} \forall X,Y\in  \mathfrak{X}(S), \hspace{2mm} \forall N \in \mathfrak{X}(S)^\perp
$$
This is yet equivalent to 
\be
R(M,N,X,Y)= ds(X,Y)\, \,  g(\star^\perp N,M), \, \, 
\forall X,Y \in \mathfrak{X}(S), \hspace{2mm} \forall N,M \in \mathfrak{X}(S)^\perp \label{perp}
\ee
}
\end{remark}
\begin{proof}[of Theorem \ref{th:2}]
\smartqed
Using the Ricci equation (\ref{ricci}) and noting that $\left[A_M , A_N\right]=0$ due to Theorem \ref{th:1} one obtains (\ref{perp}).
Eliminating $M$ in this expression, one can also write
$$
(R (X,Y) N)^\perp = ds (X,Y) \star^\perp \! N ,\, \, \,  \forall X,Y\in  \mathfrak{X}(S), \hspace{2mm} \forall N \in \mathfrak{X}(S)^\perp
$$
which together with (\ref{normalR}) proves the result. 
\qed
\end{proof}
Finally, I give the proof of Corollary \ref{coro:1}.
\begin{proof}[of Corollary \ref{coro:1}]
\smartqed
We must prove that the necessary and sufficient condition for $S$ to be umbilical along a normal direction is that
$$R^\perp =0$$
for locally conformally flat spacetimes. It is well known \cite{Ei,Exact} that locally conformally flat semi-riemannian manifolds are characterized by the vanishing of the Weyl conformal curvature tensor $C$, defined by \cite{Ei}
\bean
C(v,w,y,z) \equiv R(v,w,y,z) +\frac{{\cal S}}{6}\left(g(v,y)g(w,z)-g(v,z)g(w,y) \right)\\
-\frac{1}{2}\left[Ric(v,y)g(w,z)-Ric(v,z)g(w,y)-Ric(w,y)g(v,z)+Ric(w,z) g(v,y) \right]
\eean
for all $v,w,y,z \in T\varietat$. It is then easily verified that in general
$$(R(X,Y)N)^\perp = (C(X,Y)N)^\perp , \hspace{1cm} \forall X,Y\in \mathfrak{X}(S), \hspace{2mm} \forall N\in \mathfrak{X}(S)^\perp$$
and consequently, if $\espaitemps$ is locally conformally flat, then 
$$(R(X,Y)N)^\perp = 0 , \hspace{1cm} \forall X,Y \in \mathfrak{X}(S), \hspace{2mm} \forall N\in \mathfrak{X}(S)^\perp$$
so that from Theorem \ref{th:2} one gets $ds =0$, or equivalently $$R^\perp =0.$$
\qed
\end{proof}

\section{Some important Corollaries and Consequences}
\label{sec:consequences}
In this section, I present several consequences of Theorems \ref{th:1} and \ref{th:2} for the special cases of pseudo-umbilical and ortho-umbilical surfaces.
\begin{corollary}[Pseudo-umbilical $S$]
A non-minimal spacelike surface $S$ is pseudo-umbilical if and only if $B$ is proportional to the Identity.
\end{corollary}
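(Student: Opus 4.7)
The approach is to reduce the corollary to a purely algebraic identity for $2{\times}2$ self-adjoint operators relating the Casorati operator $B$ and the Weingarten operator $A_H$. Specifically, I would show that $B$ and $A_H$ differ only by a scalar multiple of the identity; since ``proportional to $\mathbf{1}$'' is stable under adding multiples of $\mathbf{1}$, this immediately gives $B\propto\mathbf{1}\Leftrightarrow A_H\propto\mathbf{1}$, and then I would identify the latter with the pseudo-umbilical condition.

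To obtain the identity I would polarize Cayley--Hamilton, $M^2=(\mbox{tr}\,M)M-(\det M)\mathbf{1}$, applied to $M=A_k$, $A_\ell$, and $A_k+A_\ell$, and subtract. Using the standard $2{\times}2$ identity $\det(A_k+A_\ell)=\det A_k+\det A_\ell+\mbox{tr}\,A_k\,\mbox{tr}\,A_\ell-\mbox{tr}(A_k A_\ell)$, one arrives at
$$\{A_k,A_\ell\}=(\mbox{tr}\,A_k)\,A_\ell+(\mbox{tr}\,A_\ell)\,A_k+\bigl(\mbox{tr}(A_k A_\ell)-\mbox{tr}\,A_k\,\mbox{tr}\,A_\ell\bigr)\mathbf{1}.$$
Decomposing $H$ in a null basis via (\ref{mean}) and using the linearity of $N\mapsto A_N$ gives $A_H=-(\mbox{tr}\,A_k)A_\ell-(\mbox{tr}\,A_\ell)A_k$, and combining with $B=-\{A_k,A_\ell\}$ from (\ref{B}) yields
$$B=A_H+\bigl(\mbox{tr}\,A_k\,\mbox{tr}\,A_\ell-\mbox{tr}(A_k A_\ell)\bigr)\mathbf{1}.$$

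From this relation the equivalence $B\propto\mathbf{1}\Leftrightarrow A_H\propto\mathbf{1}$ is immediate. To close the argument I only need that $A_H\propto\mathbf{1}$ is the pseudo-umbilical condition: if $A_H=\alpha\,\mathbf{1}$, taking traces and using $\mbox{tr}\,A_H=g(H,H)$ forces $\alpha=\tfrac12 g(H,H)$, which is exactly (\ref{umb}) with $N=H$, and the converse is trivial. The non-minimality hypothesis only serves to exclude the degenerate case $H=0$, in which pseudo-umbilicality collapses to $A_H=0$ and is vacuous. I do not anticipate any serious obstacle; the whole argument rests on the polarization identity above, which is a standard $2{\times}2$ fact, and the rest is bookkeeping with the formulas (\ref{mean}) and (\ref{B}).
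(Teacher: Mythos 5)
Your argument is correct, and it is genuinely different from the paper's. The paper proves the forward direction by invoking Theorem \ref{th:1} (pseudo-umbilical $\Rightarrow$ commuting Weingarten operators), writing $B$ in the common eigen-basis and using $N_{umb}^\flat\wedge H^\flat=0$; for the converse it commutes $\{A_k,A_\ell\}=-\frac{1}{2}\mbox{tr}B\,\mathbf{1}$ with $A_k$ and $A_\ell$, applies Cayley--Hamilton to get $\mbox{tr}A_k\,[A_\ell,A_k]=\mbox{tr}A_\ell\,[A_\ell,A_k]=0$, concludes $[A_k,A_\ell]=0$ when $H\neq0$, and then reverses the first computation via Theorem \ref{th:1}. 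You instead polarize Cayley--Hamilton once and for all to obtain the unconditional pointwise identity $B=A_H+\bigl(\mbox{tr}A_k\,\mbox{tr}A_\ell-\mbox{tr}(A_kA_\ell)\bigr)\mathbf{1}$ (which I checked; it is consistent with the paper's relation $g(N_{umb},N_{umb})=g(H,H)-2\,\mbox{tr}B$ upon taking traces), so that $B\propto\mathbf{1}\Leftrightarrow A_H\propto\mathbf{1}$ is immediate, and the trace normalization $\mbox{tr}A_H=g(H,H)$ identifies the latter with condition (\ref{umb}) for $N=H$. Your route is shorter, bypasses Theorem \ref{th:1} entirely, and isolates a clean structural fact ($B$ and $A_H$ differ by a multiple of the identity, special to $2\times2$ Weingarten operators, i.e.\ to dimension four and co-dimension two); it also makes transparent why non-minimality is only needed to keep the statement non-vacuous, since for $H=0$ both conditions hold trivially (traceless self-adjoint $2\times2$ operators always anticommute to a multiple of $\mathbf{1}$). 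What the paper's longer route buys is the extra information that, when $H\neq0$, the hypothesis $B\propto\mathbf{1}$ by itself forces $[A_k,A_\ell]=0$ and hence the existence of the unique umbilical direction $N_{umb}\parallel H$, tying the corollary back to the classification machinery of \cite{S4}; your identity gives the equivalence but not this intermediate commutation statement explicitly (though it can of course be recovered afterwards from Theorem \ref{th:1}).
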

\begin{remark}
A more precise corollary will present the same statement at a point $x\in S$. I recall here Remark \ref{points} where this was carefully explained. For the sake of clarity, however, let me re-state now the previous corollary in its more precise version:

{\em At a non-minimal point $x\in S$, a spacelike surface $S$ is pseudo-umbilical if and only if $B|_x$ is proportional to the Identity.}

The same happens with all results in this paper. As the proof is always essentially the same as the one given, I will simply omit any further mention of this in what follows.
\qed
\end{remark}
\begin{proof} 
\smartqed
Assume that $S$ is pseudo-umbilical and $H\neq 0$. This means that $N_{umb}^\flat\wedge H^\flat =0$ which, on using expressions (\ref{mean}) for $H$ and (\ref{Numb}) for $N_{umb}$, becomes
$$
(\lambda_1-\lambda_2)(\nu_1+\nu_2) + (\lambda_1+\lambda_2)(\nu_1-\nu_2) =0$$
that is to say
$$
\lambda_1\nu_1-\lambda_2\nu_2 =0. $$
But then, on the common eigen-basis for $A_k$ and $A_\ell$ ---this eigen-basis does exist due to Theorem \ref{th:1}---, formula (\ref{B}) implies
$$
B=2\left(\begin{array}{cc}
-\lambda_1\nu_1 & 0 \\
0 & -\lambda_2 \nu_2
\end{array}
\right)
=-2\lambda_1\nu_1
\left(\begin{array}{cc}
1 & 0 \\
0 & 1
\end{array}
\right)
$$
or in other words
\be
B =\frac{1}{2} \mbox{tr} B \, {\bf 1} .\label{B=1}
\ee
Conversely, if (\ref{B=1}) holds then from (\ref{B})
$$
A_k A_\ell +A_\ell A_k =-\frac{1}{2} \mbox{tr} B \, {\bf 1} 
$$
and commuting here with $A_k$ and with $A_\ell$ one derives, respectively,
$$
\left[A_\ell , A_k^2\right]=0, \hspace{2cm} \left[A_k ,A_\ell^2\right]=0 .
$$
Using now the Cayley-Hamilton theorem ($A^2 -\mbox{tr}A \, A+\det A \, {\bf 1}=0$ for every $2\times 2$-matrix $A$) they become respectively
$$
\mbox{tr} A_k \, \left[A_\ell ,A_k\right]=0, \hspace{2cm} \mbox{tr} A_\ell \, \left[A_\ell ,A_k\right]=0
$$
so that $\left[A_\ell ,A_k\right]=0$ follows unless $\mbox{tr} A_k=\mbox{tr} A_\ell =0$, that is, unless $H=0$. Theorem \ref{th:1} then implies that (for $H\neq 0$) $S$ is umbilical along the direction (\ref{Numb}), and the calculation above (\ref{B=1}) can be reversed to check that this umbilical direction $N_{umb}$ is parallel to $H$.
\qed
\end{proof}

Note that the condition (\ref{B=1}) of the previous corollary can be invariantly characterized by 
$$(\mbox{tr} B)^2 - 4 \det B =0 .$$

\begin{corollary}[Ortho-umbilical $S$]
A non-minimal spacelike surface $S$ is ortho-umbilical if and only if
\be
\ii (X,Y)^\flat \wedge H^\flat =0 , \hspace{1cm} \forall X,Y \in \mathfrak{X} (S) \, .\label{ortocond0}
\ee
\end{corollary}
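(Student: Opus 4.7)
The statement asserts that ortho-umbilicity is the pointwise condition that the shape tensor $\ii$ take values in the one-dimensional normal subspace spanned by the mean curvature vector $H$. My strategy is to reduce both implications to Proposition \ref{equiv}, exploiting only the defining property that $\star^\perp H$ is normal to $H$ in $\mathfrak{X}(S)^\perp$.

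For the forward implication I assume $S$ is ortho-umbilical. By Proposition \ref{equiv}, $S$ is then $H$-subgeodesic, hence $N$-subgeodesic with $N = H$, and formula (\ref{orto}) established within the proof of that proposition specialises to
$$
\ii (X,Y) = \frac{K_1(X,Y)}{\mbox{tr}\, A_1}\, H, \hspace{1cm} \forall X,Y \in \mathfrak{X} (S),
$$
from which $\ii(X,Y)^\flat \wedge H^\flat = 0$ is immediate.

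For the converse I suppose $\ii(X,Y)^\flat \wedge H^\flat = 0$ for all $X,Y\in \mathfrak{X}(S)$. Since $H\neq 0$ on the non-minimal surface, this forces the existence of a scalar function $\phi(X,Y)$ on $S$ with $\ii(X,Y) = \phi(X,Y)\, H$. Pairing with $\star^\perp H$ through the spacetime metric, and using $g(\star^\perp H, H) = 0$---which holds universally, including in the exceptional null case $g(H,H)=0$ where $\star^\perp H$ is merely $\pm H$---yields
$$
K_{\star^\perp H}(X,Y) = g\bigl(\star^\perp H, \ii(X,Y)\bigr) = \phi(X,Y)\, g(\star^\perp H, H) = 0.
$$
Hence $A_{\star^\perp H} = 0$, i.e.\ $S$ is ortho-umbilical. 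The only (mild) subtlety is the null-$H$ case just noted; no other obstacle is anticipated, since the argument rests directly on the already-established Proposition \ref{equiv}, plus the defining orthogonality of $\star^\perp H$ with respect to $H$.
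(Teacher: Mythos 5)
Your proposal is correct and follows essentially the same route as the paper: the forward direction rests on Proposition \ref{equiv} and formula (\ref{orto}) (the paper cites Remark \ref{ortho=sub}, which encapsulates the same fact), and the converse writes $\ii(X,Y)=\kappa(X,Y)\,H$ and contracts with $\star^\perp H$ to get $K_{\star^\perp H}=0$, hence $A_{\star^\perp H}=0$. Your explicit remark that $g(\star^\perp H,H)=0$ also covers the null case $\star^\perp H=\pm H$ is a harmless clarification the paper leaves implicit; the only cosmetic quibble is that your $\phi(X,Y)$ is a symmetric $2$-covariant tensor field rather than a scalar function.
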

\begin{proof} 
\smartqed 
If $S$ is ortho-umbilical, from Remark \ref{ortho=sub} one knows that all Weingarten operators are proportional to each other so that, on using expression (\ref{orto}) one immediately derives (\ref{ortocond0}). Conversely, assume that (\ref{ortocond0}) holds (and $H\neq 0$). Then, there must exist a rank-2 symmetric covariant tensor field $\kappa$ on $S$ such that
\be
\ii (X,Y) =\kappa(X,Y) H, \hspace{1cm} \forall X,Y \in \mathfrak{X} (S) \label{ortocond}
\ee
and therefore $K_{\star^\perp H}(X,Y)=g(\star^\perp H ,\ii (X,Y))=0$ for arbitrary $X,Y \in \mathfrak{X} (S)$, that is to say, $K_{\star^\perp H}=0$, which leads to $A_{\star^\perp H}=0$.
\qed
\end{proof}
Observe that comparing (\ref{orto}) with (\ref{ortocond}) one has $\kappa = K_1 /{\mbox{tr}A_1}$. Defining $\tilde\kappa : \mathfrak{X} (S)\rightarrow \mathfrak{X} (S)$ by $g(\tilde{\kappa}(X),Y)=\kappa (X,Y)$ for all $X,Y \in \mathfrak{X} (S)$ it follows that
\be
\mbox{tr} \tilde\kappa =1 \, . \label{unittrace}
\ee
It is interesting to compare the totally umbilical condition (\ref{totumb}) with the more general ortho-umbilical one given by (\ref{ortocond}) together with (\ref{unittrace}).

The computation of $B$ for ortho-umbilical surfaces provides, by means of (\ref{ortocond}), the expression
$$
B= g(H,H)\, \tilde\kappa^2
$$
so that one has
\be
\mbox{tr} B = g(H,H) \mbox{tr}\tilde\kappa^2 =g(H,H) (1-2\det\tilde\kappa) \label{ortotrB}
\ee
where in the last step I have used, once more, the Caley-Hamilton theorem for $\tilde\kappa$ together with (\ref{unittrace}). Introducing the last formula in the Gauss equation (\ref{gauss2}) the following corollary follows.

%An alternative, explicit ---and useful for what follows--- proof utilizes the requirement that $N_{umb}$ is parallel to $\star^\perp H$, so that a calculation analogous to that in the proof of the previous corollary readily gives
%$$
%(\lambda_1-\lambda_2)(\nu_1+\nu_2) - (\lambda_1+\lambda_2)(\nu_1-\nu_2) =0,$$
%that is 
%$$\lambda_1\nu_2-\lambda_2\nu_1 =0 .$$
%But this means that $A_k$ and $A_\ell$ are proportional to each other, and therefore all Weingarten operators are proportional to any one of them. And this leads exactly to condition (\ref{ortocond}) ---computed in the common eigenbasis. 

%Recall the Gauss equation: 
%$$2\, K(S)= {\cal S} -4\,  \mbox{Ric} (\ell , k) +2 R(\ell,k,\ell,k)+ {g(H,H) -\mbox{tr}B}$$
\begin{corollary}
Ortho-umbilical surfaces satisfy the following relation between their Gaussian curvature, the curvature of the spacetime, and its normalized Lipschitz-Killing curvature $\det\tilde\kappa$:
\be
2\, K(S)= {\cal S} -4\,  \mbox{Ric} (\ell , k) +2 R(\ell,k,\ell,k) +2 g(H,H) \det\tilde\kappa \label{KS}
\ee
\end{corollary}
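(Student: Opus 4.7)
The plan is essentially a direct substitution: the corollary is obtained by feeding the ortho-umbilical trace formula (\ref{ortotrB}) into the scalar Gauss equation (\ref{gauss2}).

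First, I take the Gauss equation in its contracted form (\ref{gauss2}),
$$
2\, K(S) = {\cal S} -4\,  \mbox{Ric} (\ell , k) +2 R(\ell,k,\ell,k)+ g(H,H) -\mbox{tr}\, B,
$$
which is valid for any spacelike surface and does not yet use the ortho-umbilical hypothesis. Next, I invoke (\ref{ortotrB}),
$$
\mbox{tr}\, B = g(H,H)\bigl(1-2\det\tilde\kappa\bigr),
$$
which is where the ortho-umbilicity enters: it comes from $\ii(X,Y)=\kappa(X,Y)H$ combined with $\mbox{tr}\,\tilde\kappa=1$ (as in (\ref{ortocond}) and (\ref{unittrace})) and the Cayley--Hamilton identity $\tilde\kappa^2 - \mbox{tr}\tilde\kappa\,\tilde\kappa + \det\tilde\kappa\,{\bf 1}=0$ taken trace-wise. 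All of these ingredients have already been established in the preceding paragraphs, so I only need to cite them.

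Substituting the expression for $\mbox{tr}\, B$ into the Gauss equation produces the clean cancellation
$$
g(H,H) - g(H,H)\bigl(1-2\det\tilde\kappa\bigr) = 2\, g(H,H)\det\tilde\kappa,
$$
which yields (\ref{KS}) directly. There is essentially no obstacle here; the only nontrivial content already happened upstream, in rewriting $\mbox{tr}\, B$ purely in terms of $g(H,H)$ and $\det\tilde\kappa$. A short remark at the end is worth adding: since $\mbox{tr}\,\tilde\kappa=1$, the eigenvalues of $\tilde\kappa$ sum to one, and the scalar $\det\tilde\kappa$ plays the role of a normalized Lipschitz--Killing-type invariant of $S$, which motivates the terminology used in the statement.
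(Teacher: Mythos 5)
Your argument is correct and is exactly the paper's proof: substitute the ortho-umbilical trace formula (\ref{ortotrB}) into the scalar Gauss equation (\ref{gauss2}) and simplify, with the genuinely nontrivial work (deriving $\mbox{tr}\,B = g(H,H)(1-2\det\tilde\kappa)$ via $\ii(X,Y)=\kappa(X,Y)H$, $\mbox{tr}\,\tilde\kappa=1$ and Cayley--Hamilton) already established upstream, as you note.
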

\begin{remark}
Recall that the Lipschitz-Killing curvature relative to $N \in \mathfrak{X} (S)$ is simply defined as $\det A_N$, see e.g. \cite{Rou0}. Givent that, for ortho-umbilical surfaces, all Weingarten operators are essentially the same and can be described up to proportionality factors by the unit-trace matrix $\tilde\kappa$, the concept of normalized Lipschitz-Killing curvature, represented by $\det\tilde\kappa$, makes sense and is well-defined.
\end{remark}
\begin{proof}
\smartqed

From the Gauss equation (\ref{gauss2}) and (\ref{ortotrB}) one gets (\ref{KS}) at once.
\qed
\end{proof}
\begin{corollary}
Ortho-umbilical surfaces in Lorentz space forms have vanishing normal curvature $R^\perp =0$  and they also satisfy the following relation between  the constant curvature ${\cal K}$ of $\espaitemps$, the Gaussian curvature of $S$ and its normalized Lipschitz-Killing curvature:
$$
K(S)={\cal K} + g(H,H) \det\tilde\kappa \, .$$
\end{corollary}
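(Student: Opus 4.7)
The plan is to split the two assertions of the corollary and exploit machinery already developed in the paper. For the vanishing of $R^\perp$, I would appeal to Corollary \ref{coro:1}: an ortho-umbilical surface is, by definition, umbilical along the normal direction $\star^\perp H$, and every Lorentz space form is locally conformally flat, so Corollary \ref{coro:1} applies and delivers $R^\perp = 0$ immediately.

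For the Gaussian curvature identity my approach is to apply formula (\ref{KS}) from the preceding corollary, which already has the ortho-umbilical structure of $\ii$ built in through (\ref{ortotrB}). What remains is to evaluate the spacetime contractions ${\cal S}$, $\mbox{Ric}(\ell,k)$ and $R(\ell,k,\ell,k)$ when $\espaitemps$ has constant sectional curvature ${\cal K}$. In a space form the Riemann tensor is algebraically determined by $R(W,Z,X,Y) = {\cal K}\bigl[g(W,X)g(Z,Y) - g(W,Y)g(Z,X)\bigr]$, so all three contractions reduce to explicit multiples of ${\cal K}$ using only $g(\ell,\ell) = g(k,k) = 0$ and $g(\ell,k) = -1$.

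The hard part, such as it is, is purely bookkeeping: one has to verify that the combination ${\cal S} - 4\,\mbox{Ric}(\ell,k) + 2R(\ell,k,\ell,k)$ collapses to twice the sectional curvature of the tangent plane $T_xS$, i.e.\ to $2{\cal K}$. The footnote attached to the definition of $R(W,Z,X,Y)$ in Section \ref{sec:basics} already warns that sign conventions must be watched, but modulo that check no further geometric input is needed: substituting the evaluated contractions into (\ref{KS}) and dividing by two yields $K(S) = {\cal K} + g(H,H)\det\tilde\kappa$. A slightly more transparent alternative, should the bookkeeping prove annoying, is to bypass (\ref{KS}) and invoke the scalar Gauss equation directly in a tangent ON frame $\{e_1,e_2\}$, where the spacelike tangent-plane contribution $R(e_1,e_2,e_1,e_2)$ equals ${\cal K}$ by constant curvature and the remaining extrinsic terms are $g(\ii(e_1,e_1),\ii(e_2,e_2)) - |\ii(e_1,e_2)|^2 = g(H,H)\det\tilde\kappa$ by the ortho-umbilical ansatz.
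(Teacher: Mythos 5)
Your proposal is correct and follows essentially the same route as the paper: constant curvature implies local conformal flatness, so Corollary \ref{coro:1} gives $R^\perp=0$, and the ``trivial calculation'' showing ${\cal S}-4\,\mbox{Ric}(\ell,k)+2R(\ell,k,\ell,k)=2{\cal K}$ for a space form is exactly what the paper inserts into (\ref{KS}). Your alternative via the scalar Gauss equation in a tangent ON frame is just a restatement of how (\ref{gauss2}) and (\ref{ortotrB}) are obtained, so it adds nothing essentially new.
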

\begin{proof}
\smartqed
If $\espaitemps$ has constant curvature ${\cal K}$ it is in particular locally conformally flat so that Corollary \ref{coro:1} implies $R^\perp =0$. Then a trivial calculation using the constant-curvature hypotesis provides
$${\cal S} -4\,  \mbox{Ric} (\ell , k) +2 R(\ell,k,\ell,k)=2{\cal K}$$
so that (\ref{KS}) proves the result.
\qed
\end{proof}
\begin{remark}[{\bf The case when $S$ is pseudo- and ortho-umbilical}]
\label{pseudoandortho}
If a non-minimal $S$ is pseudo-umbilical as well as ortho-umbilical then, from the previous corollaries, clearly
$$
B= g(H,H)\, \tilde\kappa^2 \hspace{1cm} \mbox{and} \hspace{1cm} B=\frac{1}{2} \mbox{tr} B\,  \bf{1}
$$
This actually implies that either
\begin{enumerate}
\item $2\tilde\kappa = \bf{1}$, that is $2\kappa=\bar g$, so that from (\ref{ortocond}) $S$ is actually totally umbilical, or
\item 
$$
g(H,H) =0, \hspace{1cm} B=0
$$
so that %, apart from all the previous consequences for each of them, 
they have a shape tensor of the form
$$
\ii (X,Y)=-K_k(X,Y) \, \ell \, \, \, \, \, \mbox{or} \,\,\, -K_\ell(X,Y) \, k \hspace{1cm} \forall X,Y \in \mathfrak{X} (S)
$$
Thus, they are $H$-subgeodesic MOTS and also $0$-isotropic in the sense of \cite{CFG}. \qed
\end{enumerate}
\end{remark}

%ergo these are null $\varhexstar$-surfaces (including MOTS) \cite{S7}, and they are necessarily $H$-subgeodesic with now $H=\ell$ or $k$.\footnote{I recently learned that these are also called 0-isotropic surfaces, see \cite{CFG}.}

Let me finally state an instance where there always exist umbilical-type surfaces. Consider a space-time with an integrable conformal Killing vector $\xi$ (no causal character for $\xi$ is required nor necessary here)\cite{Ei,Exact}, that is, such that
\be
\forall v,w\in T\varietat, \hspace{1cm} 2\phi\,  g(v,w)=g(\nabla_v\xi , w) +g(\nabla_w\xi, v) 
\label{killing}
\ee
and also 
$$
\xi^\flat \wedge d\xi^\flat =0 .
$$
This last condition implies that $\xi$ is orthogonal to an integrable distribution, in other words, locally there exist functions $F$ and $\tau$ such that $\xi^\flat = F d\tau$, hence $\tau=$const. is a family of hypersurfaces orthogonal to $\xi$. Consider any spacelike surface $S$ imbedded in any of these orthogonal hypersurfaces (such that $\xi|_S \neq 0$). Then, $\xi\in \mathfrak{X} (S)^\perp$ and one can define its Weingarten operator $A_\xi$. From (\ref{killing}) one has
\bean
\forall X,Y\in \mathfrak{X}(S), \hspace{1cm} 2\phi|_S \,  g(X,Y)=g(\nabla_X\xi , Y) +g(\nabla_Y\xi, X)= \\
= -g(A_\xi (X),Y)- g(A_\xi (Y), X)  = -2 g(A_\xi (X),Y)\hspace{1cm}
\eean
ergo
$$
A_\xi = -\phi|_S \, \bm{1} .
$$
Thus, any such $S$ is $\xi$-umbilical and it satisfies all the properties shown above for them.

\section{Final Considerations}
Even though this paper has focused on spacelike surfaces in Lorentzian 4-dimensional manifolds, the concepts and ideas can also be considered in other dimensions and signatures, and for other types of surfaces. As a matter of fact, the main result of this paper, the commutativity of the Weingarten operators for umbilical-type surfaces, holds true, {\it mutatis mutandis}, for spacelike surfaces in 4-dimensional semi-Riemannian manifolds of {\em arbitrary signature}. The theorems are also valid for {\em timelike} surfaces. In both generalizations one only has to rewrite the proofs in ON bases (leaving the appropriate signs free to cover all possibilities).

Unfortunately, the result is exclusive, however, of dimension four and co-dimension two. A simple analysis shows that
\begin{enumerate}
\item co-dimension two spacelike submanifolds in semi-Riemannian manifolds of {\em higher dimensions} will also have \underline{two} independent Weingarten operators, and their commutativity at a point can be seen to be a necessary condition for the point to be umbilic. However, it cannot be sufficient in general. For $n$-dimensional manifolds the problem resides in the fact that any Weingarten operator is an $(n-2)\times (n-2)$ matrix, so that in diagonal form the number of equations to determine a relation between the two independent components of the would-be umbilical direction $N_{umb}$ is too large, and has no solution in general.
\item If the co-dimension is greater than two, then there are \underline{more than two} independent Weingarten operators, and their commutativity is not even a necessary condition, as can be easily checked. There can be a linear combination of three or more matrices which is proportional to the identity while the matrices do not commute.
\end{enumerate}
It will be interesting to know if there are any generalizations of the results in this paper to arbitrary dimension.

\begin{acknowledgement}
I thank Miguel S\'anchez and the referees for some comments.
Supported by grants
FIS2010-15492 (MICINN), GIU06/37 (UPV/EHU) and P09-FQM-4496 (J. Andaluc\'{\i}a---FEDER).
\end{acknowledgement}

\end{document}